\newcommand{\set}[1]{\,\left\{#1\right\}}
\newtheorem{theorem}{Theorem}
\newtheorem{lemma}[theorem]{Lemma}
\newtheorem{proposition}[theorem]{Proposition}
\newtheorem{question}{Question}
\newcommand{\ZZ}{\mathbb{Z}}
\newcommand{\CC}{\mathbb{C}}
\title{Uniformly bounded representations and exact groups}
\date{22 April 2013}
\author{Kate Juschenko}
\address{Vanderbilt University, Department of Mathematics, 1326 Stevenson Center, Nashville, TN 37240, USA}
\email{kate.juschenko@gmail.com}
\author{Piotr W. Nowak}
\thanks{The second author was partially supported by the Foundation for Polish Science\\\\
Coressponding author's email address: pnowak@mimuw.edu.pl}
\address{Instytut Matematyczny Polskiej Akademii Nauk, \'{S}niadeckich 8, 00-956 Warszawa, Poland}
\address{Instytut Matematyki, Uniwersytet Warszawski, Banacha 2, 02-097 Warszawa, Poland}
\email{pnowak@mimuw.edu.pl}
\begin{document}

\begin{abstract}
We characterize groups with Guoliang Yu's property A (i.e., exact groups) by the existence of a family
of uniformly 
bounded representations which approximate  the trivial representation.
\end{abstract}

\maketitle

Property A  is a large scale geometric property that can be viewed as a weak counterpart of amenability.
It was shown in  \cite{yu}, that for a finitely generated group property A implies the Novikov conjecture.
It was also quickly realized that this notion has many other applications and interesting connections,
see \cite{nowak-yu-book,nowak-yu-notices}.

A well-known characterization of amenability states that the constant function 1 on $G$, as a coefficient of the trivial
representation, can be approximated by diagonal, finitely supported coefficients of the 
left regular representation of $G$ on $\ell_2(G)$. In this note we prove a counterpart of this result 
for groups with property A in terms of uniformly bounded representations. A representation $\pi$ of a group $G$ on a Hilbert space $H$
is said to be uniformly bounded if $\sup_{g\in G}\Vert\pi_g\Vert_{B(H)}<\infty$.

\begin{theorem}\label{theorem : main}
Let $G$ be a finitely generated group equipped with a word length function. $G$ has property A (i.e., $G$ is exact)  if and only if 
for every $\varepsilon>0$ there exists a uniformly bounded
representation $\pi$ of $G$ on a Hilbert space $H$, a vector $v\in H$ and a constant $S>0$ such that 
\begin{enumerate}
\item $\Vert \pi_g v\Vert=1$ for all $g\in G$,
\item $\vert 1-\langle \pi_gv,\pi_hv\rangle \vert\le\varepsilon$ if $\vert g^{-1}h\vert\le 1$,
\item $\langle \pi_gv,\pi_hv\rangle=0$ if $ \vert g^{-1}h\vert \ge S$.
\end{enumerate}
\end{theorem}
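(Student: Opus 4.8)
The plan is to pass to the kernel $k(g,h)=\langle \pi_g v,\pi_h v\rangle$ on $G\times G$ and match it against the positive-definite kernel characterization of property A (see \cite{nowak-yu-book}): $G$ has property A if and only if for every $\varepsilon>0$ and $R>0$ there are $S>0$ and a positive-definite kernel $k$ on $G$ with $k(g,g)=1$, with $k(g,h)=0$ whenever $|g^{-1}h|\ge S$, and with $|1-k(g,h)|\le\varepsilon$ whenever $|g^{-1}h|\le R$. The whole point is the contrast between unitary and uniformly bounded representations: for a \emph{unitary} $\pi$ one has $\langle\pi_gv,\pi_hv\rangle=\langle v,\pi_{g^{-1}h}v\rangle$, an invariant kernel $\varphi(g^{-1}h)$, and a finitely supported normalized positive-definite function $\varphi\to 1$ is exactly amenability; so to capture the strictly larger class of exact groups one is forced to use genuinely non-unitary representations.

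For the sufficiency direction I would first observe that $k(g,h)=\langle\pi_gv,\pi_hv\rangle$ is automatically positive-definite, since $\sum_{i,j}c_i\overline{c_j}\langle\pi_{g_i}v,\pi_{g_j}v\rangle=\|\sum_i c_i\pi_{g_i}v\|^2\ge 0$; this uses only that the $\pi_g v$ lie in a Hilbert space, not uniform boundedness. By (1) the kernel is normalized, by (3) it has finite propagation $S$, and by (2) it lies within $\varepsilon$ of $1$ on the $1$-neighborhood of the diagonal, which is precisely the hypothesis for $R=1$. To upgrade to arbitrary $R$ I would realize $k=\langle\eta_g,\eta_h\rangle$ in a Hilbert space, note $|1-k(g,h)|=|\langle\eta_g,\eta_g-\eta_h\rangle|\le\|\eta_g-\eta_h\|$ and $\|\eta_g-\eta_h\|^2=2\,\mathrm{Re}(1-k(g,h))\le 2\varepsilon$ on neighbours, and telescope along a geodesic to get $\|\eta_g-\eta_h\|\le |g^{-1}h|\sqrt{2\varepsilon}$; applying the hypothesis with $\varepsilon$ replaced by a suitably smaller constant then yields the kernel required for radius $R$. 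Hence property A follows, and uniform boundedness is in fact not needed for this direction.

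For the necessity direction I would start from property A in the form of functions $\xi_g\in\ell_2(G)$ with $\|\xi_g\|=1$, $\supp\xi_g\subseteq B(g,S)$ and $\|\xi_g-\xi_h\|\le\varepsilon$ for $|g^{-1}h|\le 1$, and build a uniformly bounded representation realizing (1)--(3). Since the target kernel must be non-invariant, the natural device is to twist the left regular representation $\lambda$ by a multiplicative $1$-cocycle: set $\pi_g=M_{a_g}\lambda_g$, where $M_{a_g}$ is multiplication by $a_g\colon G\to\CC$ and $a_{gh}(x)=a_g(x)\,a_h(g^{-1}x)$, so that $\pi$ is a genuine representation with $\|\pi_g\|=\|a_g\|_\infty$, hence uniformly bounded precisely when $\sup_g\|a_g\|_\infty<\infty$. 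I would then extract the cocycle $a$ and a finitely supported cyclic vector $v$ from the property A data so that the orbit kernel $\langle\pi_gv,\pi_hv\rangle=\sum_x a_g(x)\overline{a_h(x)}\,v(g^{-1}x)\overline{v(h^{-1}x)}$ has finite propagation (since $v$ is finitely supported), is normalized, and is close to $1$ on the generators.

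The hard part is this construction. One cannot simply demand $\pi_g v=\xi_g$: forcing the relations $\pi_g\xi_h=\xi_{gh}$ makes $\pi_g$ the conjugate $\Xi\lambda_g\Xi^{-1}$ of $\lambda_g$ by the synthesis operator $\Xi\colon\delta_h\mapsto\xi_h$, which is bounded but \emph{not} bounded below (nearby $\xi_g,\xi_h$ are almost equal), so $\Xi^{-1}$ is unbounded and $\pi$ fails to be uniformly bounded; thus the property A kernel cannot be realized exactly and one must settle for a nearby kernel. The main obstacle, and the place where property A enters essentially, is therefore to produce a uniformly bounded cocycle $a$ whose orbit kernel simultaneously satisfies the normalization and the near-diagonal estimate (2); here the finite propagation and the approximate invariance $\|\xi_g-\xi_h\|\le\varepsilon$ of the property A functions are exactly what is needed to bound $\sup_g\|a_g\|_\infty$ while keeping $\langle\pi_gv,\pi_hv\rangle$ within $\varepsilon$ of $1$ on the generators.
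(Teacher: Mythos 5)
Your sufficiency direction (representations $\Rightarrow$ property A) is correct and coincides with the paper's: pass to the Gram kernel $k(g,h)=\langle\pi_g v,\pi_h v\rangle$, which is Hermitian positive definite with no use of uniform boundedness, and satisfies the kernel characterization of property A. Your telescoping argument $|1-k(g,h)|\le\|\eta_g-\eta_h\|\le|g^{-1}h|\sqrt{2\varepsilon}$ to pass from $R=1$ to arbitrary $R$ is a detail the paper only asserts (``it suffices to consider only $R=1$''), so there you are actually more complete than the original.

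The necessity direction, however, has a genuine gap: you never construct the representation. Your ansatz $\pi_g=M_{a_g}\lambda_g$ with a multiplicative cocycle $a$ is only a template; the text ends by declaring that producing a uniformly bounded cocycle $a$ whose orbit kernel satisfies (1)--(3) is ``the main obstacle'' --- but that obstacle \emph{is} the theorem, and nothing in your proposal bounds $\sup_g\Vert a_g\Vert_\infty$ or verifies the kernel conditions for any concrete $a$; it is not even clear that a multiplication-cocycle twist of $\lambda$ can realize such a kernel. The paper resolves exactly the difficulty you diagnosed (the synthesis operator $\Xi:\delta_h\mapsto\xi_h$ being bounded but not bounded below) by a different and much simpler device: a renorming. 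Take the Hermitian positive definite kernel $K$ with values in $[0,1]$, equal to $1$ on the diagonal, within $\varepsilon$ of $1$ on neighbours, and supported in $\{|g^{-1}h|<S\}$ (Tu's characterization of property A); view $K$ as a bounded operator on $\ell_2(G)$ (bounded since it has finite propagation and bounded entries), and set $T=\frac{1}{1+\varepsilon}(K+\varepsilon I)$. The summand $\varepsilon I$ creates a spectral gap $\langle v,Tv\rangle\ge\frac{\varepsilon}{1+\varepsilon}\Vert v\Vert^2$, so the new inner product $\langle v,w\rangle_T=\langle v,Tw\rangle$ gives a Hilbert space $H_T$ whose norm is equivalent to the $\ell_2$-norm, and the left regular representation $\lambda$, viewed on $H_T$, is therefore uniformly bounded. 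Taking $v=\delta_e$ one computes $\langle\lambda_g v,\lambda_h v\rangle_T=\langle\delta_g,T\delta_h\rangle=T(g,h)$, which equals $1$ on the diagonal, is within $\varepsilon+\frac{\varepsilon}{1+\varepsilon}\le 2\varepsilon$ of $1$ when $|g^{-1}h|\le 1$, and vanishes when $|g^{-1}h|\ge S$. In your own terms: the correct operator to conjugate $\lambda$ by is not $\Xi$ but $T^{1/2}$, which is invertible with bounded inverse precisely because of the added $\varepsilon I$; the representation $T^{1/2}\lambda_g T^{-1/2}$ is then uniformly bounded with the desired coefficients. This renorming-by-a-positive-operator idea is the one ingredient your proposal is missing, and without it the ``only if'' half of the theorem remains unproved.
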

Alternatively, the second condition can be replaced by an almost-invariance condition: 
$\Vert \pi_gv-\pi_hv\Vert\le \varepsilon$ if $\vert g^{-1}h\vert\le 1$.
Another characterization of property A in this spirit, involving convergence for isometric representations 
on Hilbert $C^*$-modules was
studied in \cite{douglas-nowak}.

Recall that the Fell topology on the unitary dual is defined using convergence of coefficients of unitary representations.
Theorem \ref{theorem : main} states that the trivial representation can be approximated by
uniformly bounded representations, in a fashion similar to Fell's topology. 

Similar phenomena were considered by M.~Cowling \cite{cowling1,cowling2} in the case of the Lie group 
$\mathrm{Sp}(n,1)$.
Recall that $\mathrm{Sp}(n,1)$ has property (T), and thus the trivial representation is an isolated point  
among the equivalence classes of unitary representations
in the Fell topology. Cowling showed that nevertheless, for $\mathrm{Sp}(n,1)$ 
the trivial representation can be approximated
by uniformly bounded representations in a certain sense. 
Theorem \ref{theorem : main} gives a similar statement for all discrete groups with property A.
Recall that almost all known groups with property (T) are known to have property A. In particular, the groups $\operatorname{SL}_n(\ZZ)$, $n\ge 3$,
satisfy property A \cite{guentner-higson-weinberger}. 
 
Moreover, under a stronger assumption that the group has Hilbert space compression strictly greater than $1/2$ in the
sense of  \cite{guentner-kaminker}, we obtain a path of uniformly bounded
representations, whose coefficients continuously interpolate between the trivial and the left regular representation.

 Theorem 1 suggests
the possibility of negating property A using strengthened forms of Kazhdan's property that applies to uniformly bounded representations.
\begin{question}
Are there finitely generated groups satisfying a sufficiently strong version of property (T) for uniformly bounded representations, so that these groups cannot have property A?
\end{question}
Certain versions of such a property (T) for uniformly bounded representations were considered by Cowling \cite{cowling1,cowling2}, but they would 
not apply directly in our case. Construction of new examples of finitely generated groups without property A 
is a major open problem in coarse geometry, with possible applications in operator algebras, index theory and topology of manifolds.

\subsubsection*{Acknowledgements} We would like to thank Pierre de la Harpe and Guoliang Yu for helpful comments
as well as Pierre-Nicolas Jolissaint, Mikael de la Salle and Alain Valette for carefully reading the
first version of the paper and suggesting several significant improvements.

\section{Uniformly bounded representations and property A}

Let $H_0$ be a Hilbert space with scalar product $\langle \cdot,\cdot\rangle_0$, and let $T$ be a bounded, positive, self-adjoint operator on $H_0$.
We additionally assume that $T$ has a spectral gap; that is, there exists $\lambda>0$ such that 
\begin{equation}\label{equation : spectral gap}
\langle v,Tv\rangle_0 \ge \lambda \langle v,v\rangle_0
\end{equation}
for every $v\in H_0$. 

The operator $T$ induces a new inner product on $V$, the vector space underlying $H_0$, by the formula
$$\langle v,w\rangle_T=\langle v,Tw\rangle_0.$$
The norm $\Vert v\Vert_T$ induced by $\langle\cdot,\cdot\rangle_T$ is equivalent to the original norm on $H_0$, since
$$\lambda\Vert v\Vert^2_0\le \Vert v\Vert_T^2\le \Vert T\Vert_{B(H_0)}\  \Vert v\Vert_0^2.$$
Thus we obtain a new Hilbert space $H_T$ by equipping $V$ with the norm induced by $T$.
A unitary representation $\pi$ on $H_0$ naturally becomes a uniformly bounded representation on $H_T$.
More precisely, the norm of the representation satisfies 
$$\Vert \pi\Vert=\sup_{g\in G}\Vert\pi_g\Vert_{B(H_T)} \le \dfrac{\Vert T\Vert_{B(H_0)}}{\lambda}.$$
In the Hilbert space $H_T$, the representation $\pi$ satisfies 
$$\pi_g^*=T^{-1}\pi_{g^{-1}}T,$$
for every $g\in G$.

We will now relate property A to the existence of uniformly bounded representations with the desired properties. 
From the discussion on renormings via positive operators  we derive the following fact.
\begin{lemma}\label{lemma : positive operator}
Let $G$ be a finitely generated group equipped with a word length function and $\varepsilon>0$. If there exists a Hilbert space $H_0$,
a positive self-adjoint bounded operator $T$ of $H$ satisfying (\ref{equation : spectral gap}) , a unitary representation $\pi$, a unit vector $v\in H_0$ and $S>0$ such that
\begin{enumerate}
\item $\langle \pi_gv,T\pi_gv\rangle_H=1$ for every $g\in G$,
\item $\vert 1-\langle \pi_gv,T\pi_hv\rangle_H \vert\le \theta$ whenever $\vert g^{-1}h\vert\le 1$,
\item $\langle \pi_gv,T\pi_hv\rangle_H=0$ whenever $\vert g^{-1}h\vert \ge S$,
\end{enumerate}
then there exists a uniformly bounded representation $\pi$ of $G$ on a Hilbert space $H_T$ and $v\in H_T$, satisfying the conditions listed in Theorem \ref{theorem : main}.
\end{lemma}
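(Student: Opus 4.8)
The plan is to take the unitary representation $\pi$, the operator $T$ and the unit vector $v$ supplied by the hypotheses and simply reinterpret them on the renormed Hilbert space $H_T$ introduced above; the three numerical conditions then turn into the conditions of Theorem \ref{theorem : main} once each inner product is rewritten via the defining identity $\langle x,y\rangle_T=\langle x,Ty\rangle_0$. No new object needs to be constructed — the whole content of the lemma is a change of inner product.

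First I would record that $\pi$, regarded as acting on $H_T$ (which shares its underlying vector space $V$ with $H_0$), is a uniformly bounded representation. This is exactly the preceding discussion: the spectral gap (\ref{equation : spectral gap}) together with the boundedness of $T$ yields the equivalence $\lambda\Vert v\Vert_0^2\le\Vert v\Vert_T^2\le\Vert T\Vert_{B(H_0)}\,\Vert v\Vert_0^2$, whence $\Vert\pi\Vert=\sup_{g}\Vert\pi_g\Vert_{B(H_T)}\le\Vert T\Vert_{B(H_0)}/\lambda<\infty$. The vector $v$ and the operators $\pi_g$ are meaningful on $H_T$ precisely because the underlying vector space is unchanged, and no completeness issue arises since $H_T$ is $H_0$ equipped with an equivalent norm.

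The key step is then the translation of the three hypotheses through $\langle x,y\rangle_T=\langle x,Ty\rangle_0$. Taking $x=y=\pi_g v$ converts condition (1) into $\Vert\pi_g v\Vert_T^2=\langle\pi_g v,T\pi_g v\rangle_0=1$, i.e.\ $\Vert\pi_g v\Vert_T=1$ for all $g\in G$, which is condition (1) of Theorem \ref{theorem : main}. Taking $x=\pi_g v$, $y=\pi_h v$ rewrites condition (2) as $\vert1-\langle\pi_g v,\pi_h v\rangle_T\vert\le\theta$ whenever $\vert g^{-1}h\vert\le 1$, which is condition (2) of the theorem with $\varepsilon=\theta$, and rewrites condition (3) as $\langle\pi_g v,\pi_h v\rangle_T=0$ whenever $\vert g^{-1}h\vert\ge S$, which is condition (3); the constant $S$ is carried over unchanged.

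I do not expect a genuine obstacle: the lemma is exactly the device that transports the renorming construction into the language of Theorem \ref{theorem : main}, and all the analytic content — boundedness of $T$, the spectral gap, and the resulting norm equivalence — has already been established before the statement. The only points requiring minor care are bookkeeping ones: keeping track of which inner product each condition refers to (the hypotheses are phrased in $\langle\cdot,\cdot\rangle_0$ against $T$, the conclusions in $\langle\cdot,\cdot\rangle_T$) and matching the constant $\theta$ appearing in condition (2) with the $\varepsilon$ of the theorem.
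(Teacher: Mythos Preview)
Your proposal is correct and follows exactly the same approach as the paper: renorm the underlying vector space via $\langle x,y\rangle_T=\langle x,Ty\rangle_0$, view $\pi$ and $v$ on $H_T$, and read off the conditions of Theorem~\ref{theorem : main}. The paper's proof is in fact terser than yours, merely stating that ``viewing $\pi$ and $v$ with respect to this new norm gives the required properties''; your explicit translation of each of the three conditions is a faithful unpacking of that sentence.
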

\begin{proof}
Let $V$ denote the vector space underlying $H$. We equip $V$ with a scalar product $\langle v,w\rangle_T=\langle v,Tw\rangle_0$ and obtain the space $H_T$
as explained in the previous section. Viewing $\pi$ and $v$ with respect to this new norm gives the required properties. 
\end{proof}

Recall that a Hermitian kernel on a set $X$ is a function $K:X\times X\to \CC$ such that
$K(x,y)=\overline{K(y,x)}$. $K$ is said to be positive definite if for every finitely supported function $f:X\to \CC$ 
we have
$$\sum_{x,y\in X}K(x,y)f(x)f(y)\ge 0.$$
Positive definite kernels can be used to characterize 
property A, we use this description as the definition. We refer to 
\cite{nowak-yu-notices,nowak-yu-book,tu} for more details and other characterizations of property A.

\begin{theorem}[see \cite{tu}]\label{theorem : property A via kernels}\label{theorem : property A via p.d. kernels}
A discrete metric space $X$ has property A if and only if for every $R,\varepsilon>0$ there exists a Hermitian positive definite kernel $K:X\times X\to [0,1]$ and $S>0$, satisfying
\begin{enumerate}
\item $K(x,x)=1$ for every $x\in X$,
\item $\vert 1-K(x,y)\vert\le \varepsilon$ if $d(x,y)\le R$,
\item $K(x,y)=0$ if $d(x,y)\ge S$.
\end{enumerate}
\end{theorem}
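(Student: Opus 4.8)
The plan is to prove the two implications against the original (Yu) formulation of property A, in which, for every $R,\varepsilon>0$, one is handed an $S>0$ and a family of unit vectors $\{\xi_x\}_{x\in X}\subset \ell_2(X)$ with nonnegative real coordinates, each $\xi_x$ supported in the ball $B(x,S)$, satisfying $\Vert \xi_x-\xi_y\Vert\le\varepsilon$ whenever $d(x,y)\le R$. The forward implication is the soft half: given such a family I would simply set $K(x,y)=\langle \xi_x,\xi_y\rangle$. This is automatically a Hermitian positive definite kernel, being a Gram kernel, and Cauchy--Schwarz together with nonnegativity of the coordinates places its values in $[0,1]$. Condition (1) is $\Vert\xi_x\Vert^2=1$; condition (2) follows from the polarization identity $1-K(x,y)=\tfrac12\Vert\xi_x-\xi_y\Vert^2$; and condition (3) holds with $S$ replaced by $2S$, since vectors supported in disjoint balls are orthogonal.

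The reverse implication carries the real content. Starting from a kernel $K$ as in the statement, the Kolmogorov decomposition of a positive definite kernel produces a Hilbert space and unit vectors $\eta_x$ with $K(x,y)=\langle\eta_x,\eta_y\rangle$; then $\Vert\eta_x-\eta_y\Vert^2=2(1-K(x,y))\le 2\varepsilon$ for $d(x,y)\le R$, so the almost-invariance is immediate. The difficulty is that these vectors live in an abstract space and carry no information about support in balls, whereas Yu's definition demands finitely supported functions on $X$. I expect this localization to be the main obstacle.

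To overcome it I would exploit the finite propagation of $K$: viewed as a matrix on $\ell_2(X)$, $K$ is supported within distance $S$ of the diagonal, and (using bounded geometry of $X$, which holds for finitely generated groups with the word metric) a Schur-test estimate bounds its norm by $N=\sup_x|B(x,S)|$. Since $K\ge 0$ with spectrum in $[0,N]$, the feature map may be realized concretely by $\xi_x=K^{1/2}\delta_x\in\ell_2(X)$, for which $\langle\xi_x,\xi_y\rangle=\langle\delta_x,K\delta_y\rangle=K(x,y)$. The remaining issue is that $K^{1/2}$ need not have finite propagation, so $\xi_x$ need not be supported in a ball. I would resolve this by approximating $\sqrt{t}$ uniformly on $[0,N]$ by a polynomial $p$ via Weierstrass, and replacing $\xi_x$ by $p(K)\delta_x$: a polynomial of degree $d$ in the finite-propagation operator $K$ again has finite propagation, so $p(K)\delta_x$ is supported in $B(x,dS)$, giving the ball condition with a larger $S$. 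Because $\Vert p(t)^2-t\Vert_\infty$ can be made small, the inner products $\langle p(K)\delta_x,p(K)\delta_y\rangle=\langle\delta_x,p(K)^2\delta_y\rangle$ stay within a controlled error of $K(x,y)$ uniformly in $x,y$; normalizing these almost-unit vectors then recovers a genuine Yu family, establishing property A. The only points needing care are choosing the polynomial degree so that the accumulated errors in norm and in the pairwise inner products fall below the target tolerance, and invoking bounded geometry to make all these estimates uniform over $X$.
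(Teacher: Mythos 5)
Your argument is essentially correct, but note first that the paper itself contains no proof of this statement: it is quoted from Tu \cite{tu} and explicitly adopted as the \emph{definition} of property A, with the equivalence to other formulations outsourced to the literature. So what you have written is not a variant of the paper's proof but a self-contained replacement for the citation, and it follows the standard route one finds in \cite{tu} and \cite{nowak-yu-book}: the Gram-kernel construction $K(x,y)=\langle\xi_x,\xi_y\rangle$ for the easy direction, and, for the converse, the realization $\xi_x=K^{1/2}\delta_x$ of the Kolmogorov decomposition inside $\ell_2(X)$, with a polynomial approximation of $\sqrt{t}$ on $[0,\Vert K\Vert]$ used to restore finite propagation. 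Two points need to be made explicit before the argument is complete. First, your Schur-test bound $\Vert K\Vert\le N=\sup_x|B(x,S)|$ and the uniformity of all subsequent estimates require \emph{bounded geometry}; the statement as quoted says only ``discrete metric space,'' so bounded geometry should be added as a standing hypothesis (it holds in Tu's setting and for the paper's application to finitely generated groups with word metrics, so nothing is lost). Second, the formulation of property A you fixed as your target demands vectors with nonnegative coordinates, but $p(K)\delta_x$ can have negative entries, since the approximating polynomial will in general have negative coefficients; this is repaired in one line by replacing each normalized vector $\eta_x$ by its coordinatewise modulus $|\eta_x|$, which preserves the unit norm and the support and does not increase distances, by the pointwise inequality $\bigl|\,|a|-|b|\,\bigr|\le|a-b|$. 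With these two additions your proof is complete and correct, and it has the merit of making the paper's foundational equivalence verifiable rather than merely cited.
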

For a finitely generated group $G$ we take $X$ to be $G$ with the word length metric. In that case it suffices to consider only $R=1$.
A Hermitian kernel $K$ on $X$ induces a self-adjoint 
linear operator on $\ell_2(X)$, denoted also by $K$, by viewing $K$ as a matrix over $X$. We will identify the operator 
with the kernel representing it.

\begin{lemma}\label{lemma : main lemma}
Let $G$ be a finitely generated group with Yu's property A. Then for every $\varepsilon>0$ there exists 
an operator $T$ of a Hilbert space $H$, a unitary representation $\pi$ of $G$ on $H$ and  
a unit vector $v\in H$, satisfying the conditions of lemma \ref{lemma : positive operator}.
\end{lemma}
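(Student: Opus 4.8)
The plan is to read off the required data $(H,\pi,v,T)$ directly from the positive definite kernel furnished by property A, taking $\pi$ to be the left regular representation. First I would apply Theorem \ref{theorem : property A via p.d. kernels} with $R=1$ and a small auxiliary parameter $\varepsilon_0>0$ (to be fixed at the end in terms of the constant $\theta$ appearing in Lemma \ref{lemma : positive operator}), obtaining a Hermitian positive definite kernel $K_0\colon G\times G\to[0,1]$ and a constant $S>0$ with $K_0(x,x)=1$, with $\vert 1-K_0(x,y)\vert\le\varepsilon_0$ whenever $\vert x^{-1}y\vert\le 1$, and with $K_0(x,y)=0$ whenever $\vert x^{-1}y\vert\ge S$. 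Since $G$ is finitely generated, balls of radius $S$ are finite, so $K_0$ has finite propagation with uniformly bounded row support; by the Schur test the induced operator on $\ell_2(G)$ is bounded, and positive definiteness makes it positive and self-adjoint.

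The naive choice $H=\ell_2(G)$, $\pi$ the left regular representation, $v=\delta_e$ and $T=K_0$ already yields the matrix-coefficient identity
$$\langle \pi_gv,T\pi_hv\rangle=\langle \delta_g,K_0\delta_h\rangle=K_0(g,h),$$
using $\pi_g\delta_e=\delta_g$ and $d(g,h)=\vert g^{-1}h\vert$. Conditions (1)--(3) of Lemma \ref{lemma : positive operator} would then be immediate from the three defining properties of $K_0$. The single thing this construction fails to provide is the spectral gap (\ref{equation : spectral gap}): positive definiteness only gives $\langle \xi,K_0\xi\rangle\ge 0$, not a strictly positive lower bound, and this is the only genuine obstacle in the argument.

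To overcome it I would perturb $K_0$ by a small multiple of the identity, setting
$$K=(1-t)K_0+t\,I$$
for some $t\in(0,1)$, where $I$ is the operator of the kernel supported on the diagonal with value $1$. This $K$ remains Hermitian, positive and bounded, and now $\langle \xi,K\xi\rangle\ge t\Vert\xi\Vert^2$, so $T=K$ satisfies (\ref{equation : spectral gap}) with $\lambda=t$. Crucially, adding a purely diagonal term preserves the normalization $K(x,x)=1$ and the vanishing $K(x,y)=0$ for $\vert x^{-1}y\vert\ge S$; off the diagonal one has $K(x,y)=(1-t)K_0(x,y)$, whence $\vert 1-K(x,y)\vert\le t+\varepsilon_0$ whenever $0<\vert x^{-1}y\vert\le 1$.

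Finally I would choose $t$ and $\varepsilon_0$ small enough that $t+\varepsilon_0\le\theta$, and take $H=\ell_2(G)$, $\pi$ the left regular representation, $v=\delta_e$ and $T=K$. With these choices all three conditions of Lemma \ref{lemma : positive operator} hold, so the unit vector $v$, the unitary representation $\pi$ and the positive operator $T$ are exactly the data required by the statement. The only remaining verifications -- boundedness of $K$ via the Schur test and the coefficient identity $\langle\pi_gv,T\pi_hv\rangle=K(g,h)$ -- are routine.
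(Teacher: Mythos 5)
Your proof is correct and is essentially the paper's own argument: the paper takes $T=\frac{1}{1+\varepsilon}(K+\varepsilon I)$, which is exactly your convex combination $(1-t)K_0+tI$ with $t=\varepsilon/(1+\varepsilon)$, and likewise uses the left regular representation with $v=\delta_e$, the Schur test for boundedness, and the diagonal perturbation to get the spectral gap while preserving the normalization and finite propagation. No substantive difference.
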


\begin{proof}
Let $\varepsilon>0$. Given $K$ as in Theorem \ref{theorem : property A via kernels}, define an operator
$$T=\dfrac{1}{1+\varepsilon}(K+\varepsilon I),$$
where $I$ is the identity on $H$.

It is clear that since $K$ is a positive operator, $T$ is also positive. 
It is easy to check that since $T$ is represented by a kernel, which takes values in the interval $[0,1]$ and vanishes outside of a neighborhood of the diagonal, 
 $T$ is a bounded operator on $\ell_2(G)$. Finally, 
$$\langle v,Tv\rangle = \langle v,Kv\rangle +\varepsilon \langle v,v\rangle \ge \varepsilon\langle v,v\rangle.$$
Thus $T$ is a positive, self-adjoint, bounded operator of  $H_0=\ell_2(G)$ and it satisfies (\ref{equation : spectral gap}). Consequently
we can construct a new Hilbert space $H_T$, isomorphic to $\ell_2(G)$, as 
explained earlier.

Consider now $\pi$, the left regular representation of $G$ on $\ell_2(G)$, viewed as a representation on $H_T$. 
By the previous discussion, $\pi$ is a uniformly bounded representation on $H_T$.

Denote by  $\delta_g$  the Dirac mass at $g\in G$ and let $v=\delta_e$.
Whenever $g\neq h$  we have 
\begin{equation}\label{equation : relation between kernels T and K}
\langle\pi_gv,T\pi_hv\rangle=\dfrac{1}{1+\varepsilon} \langle \pi_gv,K\pi_hv\rangle=\dfrac{1}{1+\varepsilon} \langle \delta_g,K\delta_h\rangle=\dfrac{1}{1+\varepsilon}K(g,h),
\end{equation}
and 
$$\Vert \pi_gv\Vert_T=\langle \delta_g, T\delta_g\rangle=1.$$
For $g,h\in G$ such that $\vert g^{-1}h\vert=1$ we can estimate
\begin{eqnarray*}
\vert 1- \langle \pi_gv,T\pi_hv\rangle\vert&=&\vert 1- \langle \delta_g,T\delta_h\rangle\vert\\
&=&\vert 1-T(g,h)\vert\\ 
&=&\vert 1-\dfrac{1}{1+\varepsilon} K(g,h)\vert\\
&\le &\varepsilon+\dfrac{\varepsilon}{1+\varepsilon},
\end{eqnarray*}
by (\ref{equation : relation between kernels T and K}). Also, 
$$\langle \pi_gv,T\pi_hv\rangle=\dfrac{1}{1+\varepsilon}\langle \pi_gv,K\pi_hv\rangle=0,$$ 
whenever
$\vert g^{-1}h\vert\ge S$. 
Thus $T$, $\pi$ and $v$ satisfy the required conditions with $S$ and 
$\varepsilon'=\varepsilon+\dfrac{\varepsilon}{1+\varepsilon}\le 2\varepsilon$.
\end{proof}

We are now in the position to prove the main theorem.
\begin{proof}[Proof of Theorem \ref{theorem : main}]
If $G$ is a finitely generated group with property A then we apply lemma \ref{lemma : main lemma} and lemma \ref{lemma : positive operator}
and the claim follows.

Conversely, given $\varepsilon>0$, the corresponding representation $\pi$ and a vector $v$ define $K(g,h)=\langle \pi_gv,\pi_hv\rangle$.
Then $K$ is positive definite and it is easy to check that it satisfies the conditions required by Theorem \ref{theorem : property A via p.d. kernels}.
\end{proof}

\subsection*{\textbf{A path of representations}}
Let $G$ be a finitely generated group. $G$ coarsely embeds into the Hilbert space $H$ 
if there exists a map $f:G\to H$, two non-decreasing functions $\rho_-,\rho_+:[0,\infty)\to [0,\infty)$
such that 
$$\rho_-(d(g,h))\le \Vert f(g)-f(h)\Vert_H\le \rho_+(d(g,h)),$$
and $\lim_{t\to \infty}\rho_-(t)=\infty$. Such an $f$ is called a coarse embedding. 

It is shown in \cite{guentner-kaminker} that if there exists $\theta>0$ such that $\rho_-(t)\ge Ct^{1/2+\theta}+D$
for $t\ge E$, for some constants $C,D,E>0$, then the positive definite kernel 
$$K_\alpha(g,h)=e^{-\alpha\Vert f(g)-f(h)\Vert_H^2},$$
induces a bounded positive operator on $\ell_2(G)$ for every $\alpha>0$. The proof relies on the Schur test. 
The existence of $\theta$ as above is strictly stronger than property A. 
Similarly as above we can use these
kernels to construct uniformly bounded representations.

Let $f:[0,\infty)\to [0,\infty)$ be a smooth function such that 
\begin{enumerate}
\item $\lim_{t\to 0}f(\alpha)=0$,
\item $\lim_{t\to\infty}f(\alpha)$ exists.
\end{enumerate}
Applying the previous construction to the operators
$$T_{\alpha}=K_\alpha+f(\alpha)I,$$
we obtain a family of representations $\set{\pi_\alpha}_{\alpha=0}^{\infty}$, that interpolates between the coefficients 
of the trivial representation at $\alpha=0$ 
and the left regular representation  at $\alpha=\infty$.

\section{Concluding remarks: Norms and strong property (T)}
It is natural to ask  how the norm $\Vert\pi\Vert$ of the representations in Theorem \ref{theorem : main}
behaves when $\varepsilon\to 0$. The norm of the uniformly bounded representation $\pi$ induced by renorming of a Hilbert space $H_0$ 
via a positive self-adjoint operator $T$ is the number
$$\Vert\pi \Vert=\inf \left\{ c\in [1,\infty)\ \Big\vert \begin{array}{ll} c^2T-\pi_{g^{-1}}T\pi_g \text{ is a positive operator }\\ \text{on }H_0 \text{ for every } g\in G\end{array}\right\rbrace.$$
Estimating the above norm does not seem to be an easy task.
Since the bottom of the spectrum $\lambda$ of $T$ converges toward zero as $\varepsilon\to 0$,
the right hand side of the  estimate $\Vert \pi\Vert\le \frac{\Vert T\Vert_{B(\ell_2(G))}}{\lambda}$ 
tends to infinity and it is natural to expect that the norms of $\pi$ will blow up to infinity as our 
coefficients of $\pi$ approach the
trivial representation. For some 
groups this cannot be improved. 

Consider the following strong version of property (T): \emph{$H^1(G,\pi)=0$ for any uniformly bounded representation
$\pi$ of $G$ on a Hilbert space}. 
Equivalently, every affine action with linear part $\pi$ given by a uniformly bounded
representation on a Hilbert space, has a fixed point.
This property is possessed by higher rank lattices [Shalom, unpublished], universal lattices \cite{mimura} and Gromov
monsters  \cite{naor-silberman}. As a consequence we have
\begin{proposition}
Let $G$ have the above strong property (T) for uniformly bounded representations. 
Then for any family of 
uniformly bounded representations $\pi$ satisfying Theorem \ref{theorem : main},
$\Vert\pi\Vert\to \infty$ as $\varepsilon\to 0$.
\end{proposition}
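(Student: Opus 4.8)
The plan is to argue by contradiction, assembling the hypothetical bounded family into a single uniformly bounded representation carrying a non-trivial cocycle. Suppose $\Vert\pi\Vert$ does \emph{not} tend to infinity. Then there is a constant $C<\infty$ and a sequence $\varepsilon_n\to 0$, together with uniformly bounded representations $\pi^{(n)}$ on Hilbert spaces $H_n$, unit vectors $v_n$ and radii $S_n$ satisfying conditions (1)--(3) of Theorem \ref{theorem : main} for $\varepsilon_n$, with $\Vert\pi^{(n)}\Vert\le C$ for every $n$. Passing to a subsequence I may assume $\varepsilon_n\le 4^{-n}$. I will use these data to exhibit a uniformly bounded representation of $G$ whose first cohomology is non-zero, contradicting the assumed strong property (T).

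Concretely, I would form the Hilbert space direct sum $H=\bigoplus_n H_n$ and the representation $\pi=\bigoplus_n\pi^{(n)}$, which is again uniformly bounded with $\Vert\pi\Vert\le C$. For each $n$ let $b_n(g)=\pi^{(n)}_g v_n-v_n$ be the coboundary associated with $v_n$, fix weights $t_n=2^{n/2}$, and set $b(g)=(t_n b_n(g))_n$. The claim is that $b$ is a $1$-cocycle for $\pi$ which is not a coboundary. That $b$ is a cocycle is immediate once it is well defined, since each coordinate $t_nb_n$ is a cocycle for $\pi^{(n)}$.

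The whole construction hinges on two opposing estimates for $\Vert b_n(g)\Vert$. On one hand, conditions (1) and (2) give $\Vert b_n(g)\Vert^2\le 2\varepsilon_n$ whenever $\vert g\vert\le 1$, and expanding $b_n$ along a geodesic word for $g$ via the cocycle identity, together with the uniform bound $\Vert\pi^{(n)}_x\Vert\le C$, yields $\Vert b_n(g)\Vert\le C\vert g\vert\sqrt{2\varepsilon_n}$ for every $g$. Hence for each fixed $g$ the quantity $\Vert b(g)\Vert^2=\sum_n t_n^2\Vert b_n(g)\Vert^2\le 2C^2\vert g\vert^2\sum_n t_n^2\varepsilon_n$ is finite, because $\sum_n 2^n 4^{-n}<\infty$; thus $b$ maps into $H$. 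On the other hand, conditions (1) and (3) force $\Vert b_n(g)\Vert^2=2$ as soon as $\vert g\vert\ge S_n$, so choosing $g$ of word length at least $\max(S_1,\dots,S_K)$ gives $\Vert b(g)\Vert^2\ge 2\sum_{n=1}^K t_n^2\to\infty$ as $K\to\infty$. Since $G$ is infinite (as are all the listed examples), elements of arbitrarily large word length exist, so $b$ is unbounded. Finally, because $\Vert\pi_g\Vert\le C$ for all $g$, every coboundary $g\mapsto\pi_gw-w$ is bounded by $(C+1)\Vert w\Vert$; an unbounded cocycle can therefore never be a coboundary, so $H^1(G,\pi)\neq 0$, i.e. the affine action $g\cdot\xi=\pi_g\xi+b(g)$ has no fixed point. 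This contradicts strong property (T) and proves $\Vert\pi\Vert\to\infty$.

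The delicate point, and the only place where the rapidly decaying subsequence $\varepsilon_n\le 4^{-n}$ is used, is the simultaneous requirement that $b$ take finite values on each $g$ yet be unbounded over $G$: the decay $\Vert b_n(g)\Vert\le C\vert g\vert\sqrt{2\varepsilon_n}$ for fixed $g$ must be balanced against the fixed value $\Vert b_n(g)\Vert^2=2$ beyond radius $S_n$, and this balance is achieved by the choice of weights $t_n$. Verifying that the weighted series $\sum_n t_n^2\Vert b_n(g)\Vert^2$ converges for every fixed $g$ while diverging along a sequence of elements of growing length is the crux; the remaining steps are the routine cocycle bookkeeping sketched above.
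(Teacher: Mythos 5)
Your proof is correct and follows essentially the same route as the paper: negate the conclusion, assemble the bounded family into the direct-sum representation $\bigoplus_n \pi^{(n)}$ on $\bigoplus_n H_n$, and build from the vectors $v_n$ a well-defined cocycle that is unbounded (the paper says proper), hence not a coboundary, contradicting strong property (T). The only difference is cosmetic: where the paper defers to the Bekka--Cherix--Valette construction for these estimates on the unweighted cocycle $b_g=\oplus\bigl(\pi^{(n)}_g v_n - v_n\bigr)$ with a summable sequence of $\varepsilon_n$, you carry them out explicitly via the geodesic expansion and condition (3); your weights $t_n$ are harmless but unnecessary once $\sum_n \varepsilon_n<\infty$.
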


\begin{proof}
Assume the contrary. Then for every $\varepsilon>0$ there exists a uniformly bounded representation $\pi=\pi_{\varepsilon}$ and vectors $v_{\varepsilon}$, satisfying
the conditions of Theorem \ref{theorem : main}, with the additional property that $\sup\Vert\pi_\varepsilon\Vert\le C$
for some constant $C>0$.

Choosing a summable sequence of $\varepsilon$  we construct a Hilbert space 
$H=\bigoplus_{\varepsilon} H_\varepsilon$ and a representation 
$\rho=\bigoplus \pi_{\varepsilon}$. By the assumption on the uniform bound on norms
of $\pi_\varepsilon$ the representation $\rho$ is uniformly bounded on $H$.
Now construct a cocycle $b_g=\oplus (\pi_{\varepsilon})_gv_{\varepsilon}-v_{\varepsilon}$. 
Following the proof of \cite{bekka-cherix-valette}
we conclude that $b$ is a proper cocycle, in particular $b$ is not a coboundary.
\end{proof}
\begin{bibdiv}
\begin{biblist}

\bib{bekka-cherix-valette}{article}{
   author={Bekka, M. E. B.},
   author={Cherix, P.-A.},
   author={Valette, A.},
   title={Proper affine isometric actions of amenable groups},
   conference={
      title={},
      address={Oberwolfach},
      date={1993},
   },
   book={
      series={London Math. Soc. Lecture Note Ser.},
      volume={227},
      publisher={Cambridge Univ. Press},
      place={Cambridge},
   },
   date={1995},
   pages={1--4},
}
		
\bib{cowling1}{article}{
   author={Cowling, M.},
   title={Unitary and uniformly bounded representations of some simple Lie
   groups},
   conference={
      title={Harmonic analysis and group representations},
   },
   book={
      publisher={Liguori},
      place={Naples},
   },
   date={1982},
   pages={49--128},
}

\bib{cowling2}{article}{
   author={Cowling, M.},
   title={Harmonic analysis on some nilpotent Lie groups (with application
   to the representation theory of some semisimple Lie groups)},
   conference={
      title={Topics in modern harmonic analysis, Vol. I, II (Turin/Milan,
      1982)},
   },
   book={
      publisher={Ist. Naz. Alta Mat. Francesco Severi},
      place={Rome},
   },
   date={1983},
   pages={81--123},
}

\bib{douglas-nowak}{article}{
   author={Douglas, R. G.},
   author={Nowak, P. W.},
   title={Hilbert $C^*$-modules and amenable actions},
   journal={Studia Math.},
   volume={199},
   date={2010},
   number={2},
   pages={185--197},
}

\bib{guentner-higson-weinberger}{article}{
   author={Guentner, E.},
   author={Higson, N.},
   author={Weinberger, S.},
   title={The Novikov conjecture for linear groups},
   journal={Publ. Math. Inst. Hautes \'Etudes Sci.},
   number={101},
   date={2005},
   pages={243--268},
}

\bib{guentner-kaminker}{article}{
   author={Guentner, E.},
   author={Kaminker, J.},
   title={Exactness and uniform embeddability of discrete groups},
   journal={J. London Math. Soc. (2)},
   volume={70},
   date={2004},
   number={3},
   pages={703--718},
}

\bib{mimura}{article}{
   author={Mimura, M.},
   title={Fixed point properties and second bounded cohomology of universal
   lattices on Banach spaces},
   journal={J. Reine Angew. Math.},
   volume={653},
   date={2011},
   pages={115--134},
}
		
\bib{naor-silberman}{article}{
   author={Naor, A.},
   author={Silberman, L.},
   title={Poincar\'e inequalities, embeddings, and wild groups},
   journal={Compos. Math.},
   volume={147},
   date={2011},
   number={5},
   pages={1546--1572},
}

\bib{nowak-yu-notices}{article}{
   author={Nowak, P. W.},
   author={Yu, G.},
   title={What is $\dots$ property A?},
   journal={Notices Amer. Math. Soc.},
   volume={55},
   date={2008},
   number={4},
   pages={474--475},
}

\bib{nowak-yu-book}{book}{
   author={Nowak, P. W.},
   author={Yu, G.},
   title={Large scale geometry},
   series={EMS Textbooks in Mathematics},
   publisher={European Mathematical Society (EMS), Z\"urich},
   date={2012},
   pages={xiv+189},
}

\bib{tu}{article}{
   author={Tu, J.-L.},
   title={Remarks on Yu's ``property A'' for discrete metric spaces and
   groups},
   journal={Bull. Soc. Math. France},
   volume={129},
   date={2001},
   number={1},
   pages={115--139},
}

\bib{yu}{article}{
   author={Yu, G.},
   title={The coarse Baum-Connes conjecture for spaces which admit a uniform
   embedding into Hilbert space},
   journal={Invent. Math.},
   volume={139},
   date={2000},
   number={1},
   pages={201--240},
}

\end{biblist}
\end{bibdiv}

\end{document}